\newcommand\version{February 11, 2019}
\newtheorem{theorem}{Theorem}[section]
\newtheorem{corollary}[theorem]{Corollary}
\theoremstyle{definition}
\theoremstyle{remark}
\newtheorem{remark}[theorem]{Remark}
\numberwithin{equation}{section}
\newcommand{\C}{\mathbb{C}}
\renewcommand{\epsilon}{\varepsilon}
\newcommand{\N}{\mathbb{N}}
\begin{document}

\title[Sharpened Triangle Inequality --- \version]{Inequalities  that 
sharpen the triangle inequality for sums of $N$ functions in $L^p$}

\author{Eric A. Carlen}
\address[Eric A. Carlen]{Department of Mathematics, Hill Center,
Rutgers University, 110 Frelinghuysen Road, Piscataway NJ 08854-8019, USA}
\email{carlen@math.rutgers.edu}

\author{Rupert L. Frank}
\address[Rupert L. Frank] {Mathematisches Institut, Ludwig-Maximilans 
Universit\"at M\"unchen, Theresienstr. 39, 80333 M\"unchen, Germany, and 
Mathematics 253-37, Caltech, Pasa\-de\-na, CA 91125, USA}
\email{rlfrank@caltech.edu}

\author{Elliott H. Lieb}
\address[Elliott H. Lieb]{Departments of Mathematics and Physics,
Princeton University, Princeton, NJ 08544,
USA}
\email{lieb@princeton.edu}

\thanks{\copyright\, 2019 by the authors. This paper may be reproduced, in
its entirety, for non-commercial purposes.\\
Work partially supported by NSF grants DMS--1501007 (E.A.C.) and  DMS--1363432 (R.L.F.)}

\begin{abstract}
We study $L^p$ inequalities that sharpen the triangle inequality for sums of $N$ functions in $L^p$. 
\end{abstract}

\maketitle
\centerline{\version}

\section{Introduction and main theorem}

Since $|z|^p$ is a strictly convex function of $z\in \C$ for $p>1$,  for any $N\in \N$, 
$$\left|\textstyle{  \frac1N \sum_{j=1}^N z_j}\right|^p \leq   \frac1N \textstyle{ \sum_{j=1}^N |z_j|^p}\ ,$$
with equality if and only if $z_i = z_j$ for all $i,j$.
It follows immediately that for any set  $\{f_1,\dots,f_N\}$ of  measurable functions on any measure space,
\begin{equation}\label{basconv}
 \textstyle{\int  \left| \frac1N \sum_{j=1}^N f_j\right|^p \leq \frac1N  \sum_{j=1}^N\int |f_j|^p}\ ,
\end{equation}
and there is equality if and only if for almost every $x$, $f_i(x) = f_j(x)$ for all $i,j$. 

The inequality \eqref{basconv} implies the triangle inequality for the $L^p$ norms, $\|\cdot\|_p$: 
Suppose that $g$ and $h$ are two functions in $L^p$ and suppose that for some $m,n\in \N$, $\|g\|_p = m$ and $\|h\|_p = n$. Let $N = m+n$, and define $f_j = \frac1m g$ for $1 \leq j \leq m$, and $f_j = \frac1n h$ for $m+1 \leq j \leq N$, and note that each $f_j$ is a unit vector.    Then \eqref{basconv} says that
\begin{equation}\label{tri1}
\left(\frac{\|g+h\|_p}{\|g\|_p + \|h\|_p}\right)^p = \int  \left|\textstyle{  \frac1N \sum_{j=1}^N f_j}\right|^p \leq  \frac1N \sum_{j=1}^N \|f_j\|_p^p =1\ .
\end{equation}
That is, $\|g+h\|_p \leq \|g\|_p+\|h\|_p$, which is the triangle inequality in $L^p$. By homogeneity, the condition on the norms of $g$ and $h$ reduces to the ratio of these norms being rational, and then by continuity, the condition on the norms may be dropped altogether. In this elementary argument, we loose information on the cases of equality 
when $\|g\|/\|h\|$ is not rational.  If however we can sharpen \eqref{basconv}, then we can also sharpen the triangle inequality, as we show below.

In this paper we shall prove several theorems that sharpen \eqref{basconv}. First rewrite
 \eqref{basconv} as
 \begin{equation}\label{basconv2}
\| \textstyle{\sum_{j=1}^N f_j}\|_p^p \leq N^{p-1} \sum_{j=1}^N\|f_j\|_p^p\ .
\end{equation}
One case in which \eqref{basconv2} leaves much room for improvement is that in which the functions
$f_1,\dots,f_N$  satisfy  $f_if_j = 0$ for all $i\neq j$; that is, the functions  have disjoint supports. 
Then  $|f_1+\cdots + f_N|^p = |f_1|^p +\cdots |f_N|^p$, and hence in this case the factor of $N^{p-1}$ in \eqref{basconv2}
is superfluous, and 
 \begin{equation}\label{basconv3}
\| \textstyle{\sum_{j=1}^N f_j}\|_p^p \leq \sum_{j=1}^N\|f_j\|_p^p\ .
\end{equation}

{\em We seek inequalities that interpolate between \eqref{basconv2} and  \eqref{basconv3} in the sense that they sharpen 
\eqref{basconv2} and reduce to \eqref{basconv3} as $\|f_if_j\|_{p/2}^{p/2}$ goes to zero for all $i\neq j$.} Towards this end we define
\begin{equation}\label{gamde2}
\boxed{
\Gamma_p(f_1,\dots,f_N) := \frac{ \textstyle{ \left\Vert {{N}\choose{2}}^{-1}\sum_{i< j} |f_if_j|\right\Vert_{p/2}^{p/2}}}{\frac1N \sum_{j=1}^N \|f_j\|_p^p}\ .}
\end{equation}
Here and in the following, we write 
${\displaystyle \|f\|_p := \left(\textstyle{\int |f|^p }\right)^{1/p} \quad \text{for {\em all}} \ p\neq 0}$
despite the fact that for $p<1$, $\|\cdot \|_p$ is not a norm, as the notation may well suggest.

Note that
\begin{equation}\label{gamde4}
0 \leq \Gamma_p(f_1,\dots,f_N) \leq 1\ ,
\end{equation} 
with equality on the left if and only if all of the functions have disjoint support, and on the right if and only if all of the functions are equal.  Thus, for any $r > 0$, an inequality of the form
\begin{equation}\label{gamde5}
\|\textstyle{\sum_{j=1}^N f_j}\|_p^p \leq  \big[1 +(N-1)
\Gamma_p^r(f_1,\dots,f_N)\big]^{p-1}{\textstyle \sum_{j=1}^N \|f_j\|_p^p}
\end{equation}
would interpolate between \eqref{basconv2} and  \eqref{basconv3}, and sharpen the former. 

At $p=2$, there is actually an {\em identity} of this form:  Since for non-negative $f_i$,
$$\Gamma_2(f_1,\dots,f_N)  =
\frac{2 \sum_{i<j}\int f_if_j}{(N-1)\sum_{i=1}^N \|f_i\|_2^2}\ ,  $$
it follows that
\begin{equation}\label{p2id}
\|\textstyle{\sum_{j=1}^N f_j}\|_2^2 =  \big[1 +(N-1)\Gamma_2(f_1,\dots,f_N)\big]{\textstyle \sum_{j=1}^N \|f_j\|_2^2}\ ,
\end{equation}
which is \eqref{gamde5} for $p=2$ and $r=1$, except that it holds as an identity and not only an inequality. The inequality
 \eqref{gamde5},  proved here for $p>2$ and with  $r= r(N,p)$ specified below, stands in the same relation to the identity
 \eqref{p2id} that Clarkson's inequality \cite{Cl} (see also \cite{BCL}):
\begin{equation}\label{clark}
\left\Vert \frac{g+h}{2}\right\Vert_p^p  + \left\Vert \frac{g-h}{2}\right\Vert_p^p  \leq  \frac{\|g\|_p^p+\|h\|_p^p}{2}\ .
\end{equation}
stands to the  {\em Parallelogram Law}, the identity between the left and right sides of \eqref{clark} at $p=2$.  For $1 < p < 2$,
we prove that the {\em reverse} of \eqref{gamde5} is valid for $r = r(N,p)$ specified below. In this case, the inequality does not sharpen the triangle inequality. Instead, it complements it by providing a \emph{lower} bound on $\|\textstyle{\sum_{j=1}^N f_j}\|_p$.
Our main theorem is the following:

\begin{theorem}[Main Theorem]  \label{main}  For any $N\geq 2$, and any set of $N$  non-negative measurable functions $f_1,\dots,f_N$ on any measure space, and all $p\in (2,\infty)$,
\begin{equation}\label{mainin}
\boxed{ \phantom{\bigg|_{A_A}^{A^A}}
\|\textstyle{\sum_{j=1}^N f_j}\|_p^p \leq  \biggl[\,1\, +\, (N-1)
\Gamma_p(f_1,\dots,f_N)^{r(N,p)}\biggr]^{p-1}{\textstyle \sum_{j=1}^N \|f_j\|_p^p} \phantom{\bigg|_{A_A}^{A^A}}}
\end{equation}
where
\begin{equation}\label{rval1}
r(N,p) =  \frac{2N}{2N+(p-2)(2N-1)}\ .
\end{equation}
For $p\in (1,2)$ the reverse inequality is valid. 
\end{theorem}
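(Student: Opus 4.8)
The plan is to turn the functional inequality \eqref{mainin} into a scalar inequality in which the exponent $r(N,p)$ of \eqref{rval1} appears as precisely the sharp interpolation parameter between the two equality cases recorded after \eqref{gamde4}: all $f_j$ mutually disjoint, where $\Gamma_p=0$ and \eqref{mainin} reduces to \eqref{basconv3}, and all $f_j$ equal, where $\Gamma_p=1$ and \eqref{mainin} becomes \eqref{basconv2} with equality. Writing $S=\sum_{j=1}^N f_j$, $A=\sum_{j=1}^N\|f_j\|_p^p$ and $B=\bigl\|\sum_{i<j}f_if_j\bigr\|_{p/2}^{p/2}$, the quantity $\Gamma_p$ is a fixed multiple of $B/A$; both sides of \eqref{mainin} are homogeneous of degree $p$, so I may normalise $A=N$, and by approximating in $L^p$ it suffices to treat simple functions, whence every integral is a finite sum. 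The target is then $\|S\|_p^p\le\bigl[1+(N-1)\Gamma_p^{\,r(N,p)}\bigr]^{p-1}A$.

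The analytic engine relates $\|S\|_p^p$ to $A$ and $B$ through the pointwise identity $S^2=\sum_j f_j^2+2\sum_{i<j}f_if_j$. Splitting $\int S^p=\int S^{p-2}\sum_j f_j^2+2\int S^{p-2}\sum_{i<j}f_if_j$ and applying H\"older with exponents $\tfrac{p}{p-2}$ and $\tfrac p2$ to each term, then cancelling a factor $\|S\|_p^{p-2}$, gives the clean estimate $\|S\|_p^2\le\sum_j\|f_j\|_p^2+2B^{2/p}$. A parallel route applies the exact identity \eqref{p2id} to the functions $f_j^{p/2}$, yielding $\int\bigl(\sum_j f_j^{p/2}\bigr)^2=A+2\sum_{i<j}\int(f_if_j)^{p/2}$, which after an $\ell^1$--$\ell^{p/2}$ comparison bounds $\|S\|_p^p$ from above by a multiple of $A+2B$. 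Both bounds reduce to equalities when the $f_j$ coincide, but both strictly overshoot the true value when the supports are nearly disjoint; since the theorem's right-hand side is close to $A$ for small $\Gamma_p$, these first attempts are too weak exactly in the disjoint regime, which is why the genuine content lives there and why one is forced to the sub-unit exponent $r(N,p)<1$.

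To extract the sharp exponent I would fix $\Gamma_p$ and maximise $\|S\|_p^p/A$ over all admissible systems, i.e. maximise $\int(\sum_j f_j)^p$ subject to the two integral constraints $\int\sum_j f_j^p=A$ and $\int(\sum_{i<j}f_if_j)^{p/2}=B$. A pointwise Lagrangian (bang--bang) argument applies: at an extremiser, at almost every $x$ the vector $(f_1(x),\dots,f_N(x))$ maximises the degree-$p$ homogeneous integrand $(\sum_j f_j)^p-\lambda\sum_j f_j^p-\mu(\sum_{i<j}f_if_j)^{p/2}$, and homogeneity reduces this to a maximisation over the unit sphere whose optimal rays are finite (up to the permutation symmetry). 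Hence an extremiser takes values along finitely many rays, the problem collapses to a low-dimensional optimisation, and finally to a one-variable inequality of the schematic form $L(t)\le\bigl[1+(N-1)t^{\,r}\bigr]^{p-1}$ on $t\in[0,1]$ whose endpoints $t=0,1$ are forced to be equalities. The value \eqref{rval1} is exactly the one for which this scalar inequality holds on all of $[0,1]$; confirming it is a calculus check, matching the boundary behaviour and ruling out an interior crossing.

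The hard part will be the reduction to extremal configurations and the identification of the true worst case. A second-order (Hessian) analysis at the all-equal configuration only produces a strictly weaker restriction than \eqref{rval1}, so the binding configuration is \emph{not} a perturbation of equal functions; combined with the fact that the naive H\"older bound of the second paragraph already fails near disjoint supports, this shows the critical system sits in the interior, and the scalar inequality must be established globally rather than perturbatively. Finally, for $p\in(1,2)$ the map $t\mapsto t^{p/2}$ is concave, which reverses the superadditivity of $\sum_{i<j}(f_if_j)^{p/2}$, and the H\"older steps above become reverse-H\"older estimates for exponents below one; running the identical scheme with every inequality flipped yields the asserted reverse of \eqref{mainin} with the same $r(N,p)$.
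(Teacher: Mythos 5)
Your second paragraph is harmless exploration, but the entire proof lives in your third paragraph, and that paragraph is a program, not a proof. None of its steps is actually carried out: existence of a maximiser of $\|\sum_j f_j\|_p^p$ under the two integral constraints is not addressed (the measure space is arbitrary, possibly atomic or of infinite mass, so no compactness is available); the pointwise ``bang--bang'' principle you invoke is not the correct first-order condition (an extremiser satisfies a pointwise stationarity relation, and upgrading that to pointwise maximisation of the Lagrangian density requires a rearrangement or Lyapunov-convexity argument you do not supply); the claim that the optimal rays are finitely many is unsubstantiated; and the function $L(t)$ in your ``one-variable inequality'' is never identified, so the concluding ``calculus check'' checks nothing. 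Note also that the endpoint conditions at $t=0,1$ hold for \emph{every} $r>0$, so matching endpoints cannot single out \eqref{rval1}.

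More seriously, the premise of the program is false, provably so for $N=2$. If your reduction to the exact extremal problem could be completed, the exponent it would yield is the \emph{sharp} exponent for \eqref{gamde5}, i.e.\ the largest $r$ for which \eqref{gamde5} holds --- and that is not $r(N,p)$. For $N=2$ the sharp exponent is $2/p$ (proved in \cite{CFIL}), while $r(2,p)=4/(4+3(p-2))<2/p$; for $N>2$ the paper's sharpness analysis (the trial functions leading to \eqref{nec2}, together with the observation that the convexity reduction of \cite{CFIL} already fails for $N=3$, $p=4$) shows that the sharp exponent is unknown, lying somewhere between $r(N,p)$ and the right side of \eqref{nec2}. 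So your plan requires solving an extremal problem that is left open, and asserts its answer is a value known to be wrong at $N=2$. The paper's proof deliberately avoids this problem by a chain of \emph{lossy} reductions: change the measure and divide each $f_i$ by $S=\sum_j f_j$, so that one may assume $\sum_j f_j=1$ a.e.\ on a probability space; then $\sum_{i<j}f_if_j=\tfrac12\bigl(1-\sum_j f_j^2\bigr)$ pointwise, and Jensen (convexity of $t^{p/2}$) plus the H\"older bound $\int f^2\le \|f\|_1^{(p-2)/(p-1)}\bigl(\|f\|_p^p\bigr)^{1/(p-1)}$ collapse everything to the one-parameter inequality \eqref{nf5} in $B=\sum_j\int f_j^p$, which after the substitution \eqref{cha1} becomes the elementary convexity estimate \eqref{final}. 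The exponent $r(N,p)$ is sharp for that relaxed scalar inequality, not for the original one --- which is precisely why it is attainable by an explicit argument. Your treatment of $p\in(1,2)$ inherits the same gap, being parasitic on the main construction.
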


Notice that $r(N,2) =1$, so that the inequality \eqref{mainin} reduces to the identity \eqref{p2id} at $p=2$. The case $N=2$ is special. This case was considered by us in \cite{CFIL} where \eqref{mainin} was proved with $2/p$ in place of $r(2,p)$. Note that
${\displaystyle r(2,p) = \frac{4}{4 + 3(p-2)} < \frac2p}$.
and since $\Gamma_p(f_1,f_2) \leq 1$,   the larger the exponent $r$ in \eqref{gamde5} is, the stronger the inequality is. However, as we shall show here, $N\geq 3$ is significantly different from $N=2$.  The inequality  \eqref{mainin} would reduce to the inequality found in \cite{CFIL} for $N=2$ were it possible to replace $r(N,p)$ by 
\begin{equation}\label{rval2}
\widetilde{r}(N,p) =  \frac{2N}{2N+(p-2)(2N-2)}  = \frac{N}{N+ (p-2)(N-1)}\ .
\end{equation}
We shall see below why this is possible for $N=2$, but not for $N> 2$. 

For $N=2$, there is only one pair of functions to consider. When there are more pairs, there are several choices that one might make in defining the quantity $\Gamma_p(f_1,\dots,f_N)$. Another possibility that may at first seem more natural is
\begin{equation}\label{gamde}
\boxed{
\widetilde{\Gamma}_p(f_1,\dots,f_N) = 
\frac{  {{N}\choose{2}}^{-1}\sum_{i< j} \|f_if_j\|_{p/2}^{p/2}}{\frac1N \sum_{j=1}^N \|f_j\|_p^p}\ .}
\end{equation}
By Jensen's  inequality, for $p>2$ (or $p< 0$),
$$\Gamma_p(f_1,\dots,f_N)  \leq \widetilde{\Gamma}_p(f_1,\dots,f_N)\ ,$$
and this inequality reverses when $p\in (0,2)$. Hence, for $p>2$, where inequality \eqref{mainin} sharpens the triangle inequality, it implies the corresponding inequality with $\widetilde{\Gamma}_p(f_1,\dots,f_N)$ replacing 
${\Gamma}_p(f_1,\dots,f_N)$.   Note also that for all $N$,
\begin{equation}\label{rbound}
r(N,p) > \frac{1}{p-1}\ .
\end{equation}
By combining the last remarks, we have the following immediate corollary of Theorem~\ref{main}.

\begin{corollary}[Simplified Bound]\label{maincl}  For any $N\geq 2$, and any set of $N$  non-negative measurable functions $f_1,\dots,f_N$ on any measure space, and all $p\in (2,\infty)$
\begin{equation}\label{mainclin}
\boxed{ \phantom{\bigg|_{A_A}^{A^A}}
\|\textstyle{\sum_{j=1}^N f_j}\|_p^p \leq  \biggl[1 +(N-1)\widetilde{\Gamma}_p(f_1,\dots,f_N)^{1/(p-1)}\biggr]^{p-1}{\textstyle \sum_{j=1}^N \|f_j\|_p^p}   \phantom{\bigg|_{A_A}^{A^A}}}
\end{equation}
For $p\in (1,2)$ the reverse inequality is valid. 
\end{corollary}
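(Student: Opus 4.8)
The plan is to deduce the Corollary directly from Theorem~\ref{main}, using only the two structural facts already recorded in the text: the Jensen comparison $\Gamma_p(f_1,\dots,f_N) \leq \widetilde{\Gamma}_p(f_1,\dots,f_N)$ for $p>2$ (with the reverse on $(1,2)$), and the exponent bound $r(N,p) > 1/(p-1)$ from \eqref{rbound}. The whole argument rests on two elementary monotonicity observations: first, that both $\Gamma_p$ and $\widetilde{\Gamma}_p$ lie in $[0,1]$ (for $\Gamma_p$ this is \eqref{gamde4}), so that raising them to a power is monotone in the exponent in a controllable direction; and second, that the bracketed factor $t \mapsto \bigl[1+(N-1)t\bigr]^{p-1}$ is increasing for $t \geq 0$, since $p-1>0$ and $N \geq 2$.

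For $p>2$ I would first establish the single scalar comparison
\[
\Gamma_p(f_1,\dots,f_N)^{r(N,p)} \leq \widetilde{\Gamma}_p(f_1,\dots,f_N)^{1/(p-1)} .
\]
This comes in two steps. Since $0 \leq \Gamma_p \leq 1$ and $r(N,p) > 1/(p-1) > 0$, raising the base $\Gamma_p$ to the \emph{larger} exponent can only decrease it, giving $\Gamma_p^{r(N,p)} \leq \Gamma_p^{1/(p-1)}$. Then, because $\Gamma_p \leq \widetilde{\Gamma}_p$ and the exponent $1/(p-1)$ is positive, monotonicity of $t \mapsto t^{1/(p-1)}$ yields $\Gamma_p^{1/(p-1)} \leq \widetilde{\Gamma}_p^{1/(p-1)}$; concatenating the two gives the displayed bound. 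Feeding this into \eqref{mainin} and invoking the increasing bracket produces exactly \eqref{mainclin}.

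For $p \in (1,2)$ every inequality above reverses in a coherent way, so the reverse of \eqref{mainclin} falls out. Here Theorem~\ref{main} already supplies the reverse of \eqref{mainin}; the Jensen comparison becomes $\Gamma_p \geq \widetilde{\Gamma}_p$; and, as one checks from \eqref{rval1}, the exponent inequality reverses to $r(N,p) < 1/(p-1)$, both quantities staying positive since the denominator $2N+(p-2)(2N-1)$ remains positive on $(1,2)$. Because $0 \leq \Gamma_p \leq 1$ and the exponent is now smaller, $\Gamma_p^{r(N,p)} \geq \Gamma_p^{1/(p-1)} \geq \widetilde{\Gamma}_p^{1/(p-1)}$, and the increasing bracket converts the reverse of \eqref{mainin} into the reverse of \eqref{mainclin}.

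There is no serious obstacle: the content is entirely in Theorem~\ref{main}, and the Corollary is a bookkeeping step. The only point requiring care is the consistent tracking of inequality directions across the two regimes $p>2$ and $p \in (1,2)$, since \emph{both} the Jensen step and the exponent-comparison step flip as $p$ crosses $2$; one must confirm that they flip together, so the two reversals compound rather than cancel. The single small computation to verify is $r(N,p) < 1/(p-1)$ on $(1,2)$, which after cross-multiplication (legitimate because both denominators are positive there) reduces to the sign condition $p-2<0$.
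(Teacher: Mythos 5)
Your proposal is correct and is essentially the paper's own argument: the paper obtains Corollary~\ref{maincl} as an immediate consequence of Theorem~\ref{main} by combining the Jensen comparison between $\Gamma_p$ and $\widetilde{\Gamma}_p$ with the exponent comparison \eqref{rbound}, using $0\leq \Gamma_p\leq 1$ and the monotonicity of $t\mapsto \bigl[1+(N-1)t\bigr]^{p-1}$, exactly as you do. Your write-up merely makes explicit the $p\in(1,2)$ bookkeeping (in particular that $r(N,p)<1/(p-1)$ in that range, i.e.\ the reverse of \eqref{rbound}, which as stated concerns $p>2$), a detail the paper leaves implicit in calling the corollary ``immediate''.
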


\begin{remark}\label{sharpmaincl} We shall show below that this inequality does not hold, uniformly in $N$ if the exponent $1/(p-1)$ is replaced by any larger value, though of course, for each $N$, we may replace it by $r(N,p)$, and the inequality is still valid.
\end{remark}

We now show how Corollary~\ref{maincl} yields a sharpened form of the triangle inequality for $p>2$.     Let $g,h\in L^p$, and define $\lambda := \frac{\|g\|_p}{\|g\|_p + \|h\|_p}$. Choose integers $m_N$ so that $\lim_{N\to\infty} \frac{m_N}{N} = \lambda$.   Define
$$u :=  \|g\|_p^{-1} |g| \, \quad   v :=  \|h\|_p^{-1} |h|   \quad{\rm and}\quad f_j := \begin{cases} u & 1 \leq j \leq m_N\\ v & m_N < j \leq N\end{cases}\ .$$
Then, reasoning  as in \eqref{tri1},
\begin{equation}\label{tri2}
\left(\frac{\||g|+|h|\|_p}{\|g\|_p + \|h\|_p}\right)^p = \lim_{N\to\infty}\int  \left|\textstyle{  \frac1N \sum_{j=1}^N f_j}\right|^p \ ,
\end{equation}
and now we apply Corollary~\ref{maincl} to estimate the right side: We find
$$
\int \left|\textstyle{  \frac1N \sum_{j=1}^N f_j}\right|^p  \leq \left(\frac{1}{N} +  \frac{N-1}{N} \left( a_N  + (1-a_N)\|uv\|_{p/2}^{p/2}  \right)^{1/(1-p)}\right)^{p-1}\ ,
$$
where 
$$
a_N = {{N}\choose{2}}^{-1}\left( {{m_N}\choose{2}}+ {{N- m_N}\choose{2}}\right) .
$$
Taking $N$ to infinity, we obtain
\begin{eqnarray}\label{tri3}
\left(\frac{\||g|+|h|\|_p}{\|g\|_p + \|h\|_p}\right)^p &\leq&  \lambda^2 + (1-\lambda)^2 + 2\lambda(1-\lambda) \|uv\|_{p/2}^{p/2} \\
 &=& 1 - 2\lambda(1-\lambda) (1- \|uv\|_{p/2}^{p/2})\\
  &=& 1 - \lambda(1-\lambda) \|u^{p/2} - v^{p/2}\|_2^2\ .
\end{eqnarray}

Expressing this in terms of $g$ and $h$, and  using $|g+h|^p \leq ||g|+|h||^p$ for $p> 0$, we have proved:
\begin{theorem}[Improved Triangle Inequality]\label{riimp} For all non-zero functions $g,h\in L^p$, $p>2$, 
\begin{equation}\label{tri5}
\|g+h\|_p \leq \left( 1 - \frac{\|g\|_p\|h\|_p}{(\|g\|_p + \|h\|_p)^2} \left\Vert \frac{|g|^{p/2}}{\|g\|_p^{p/2}}   -   \frac{|h|^{p/2}}{\|h\|_p^{p/2}}  \right\Vert_2^2 \right)^{1/p} (\|g\|_p + \|h\|_p)\ .
\end{equation}
\end{theorem}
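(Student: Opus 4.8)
The plan is to obtain \eqref{tri5} by feeding a carefully chosen family of $N$ functions into Corollary~\ref{maincl} and letting $N\to\infty$; the device is to encode an arbitrary weight $\lambda\in(0,1)$ as the limiting proportion of two types of functions, which is exactly what turns the discrete ``equal weight'' bound into a statement at arbitrary weight.

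First I would reduce to unit vectors. Since $|g+h|\le |g|+|h|$ pointwise we have $\|g+h\|_p\le \||g|+|h|\|_p$, so it suffices to bound $\||g|+|h|\|_p$. Set $u:=|g|/\|g\|_p$ and $v:=|h|/\|h\|_p$, both nonnegative with $\|u\|_p=\|v\|_p=1$, and $\lambda:=\|g\|_p/(\|g\|_p+\|h\|_p)$. Choosing integers $m_N$ with $m_N/N\to\lambda$ and taking $f_j=u$ for $1\le j\le m_N$ and $f_j=v$ otherwise, the average $\tfrac1N\sum_j f_j=\tfrac{m_N}{N}u+\tfrac{N-m_N}{N}v$ converges in $L^p$ to $\lambda u+(1-\lambda)v=(\|g\|_p+\|h\|_p)^{-1}(|g|+|h|)$, by the triangle inequality and $m_N/N\to\lambda$. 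Hence $\int|\tfrac1N\sum_j f_j|^p$ converges to $(\||g|+|h|\|_p/(\|g\|_p+\|h\|_p))^p$, which is \eqref{tri2}.

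Next I would apply Corollary~\ref{maincl} to this configuration. Because every $\|f_j\|_p=1$, the denominator in $\widetilde\Gamma_p$ equals $1$, and sorting the pairs $\{i,j\}$ into like pairs (both $u$ or both $v$) and mixed pairs gives $\widetilde\Gamma_p(f_1,\dots,f_N)=a_N+(1-a_N)\|uv\|_{p/2}^{p/2}$, where $a_N$ is the fraction of like pairs; note $\|uv\|_{p/2}^{p/2}=\int u^{p/2}v^{p/2}\le 1$ is finite by Cauchy--Schwarz. Rewriting Corollary~\ref{maincl} as an upper bound on $\int|\tfrac1N\sum_j f_j|^p$ and letting $N\to\infty$, with $a_N\to \lambda^2+(1-\lambda)^2$, $\tfrac1N\to 0$ and $\tfrac{N-1}{N}\to 1$, the prefactor collapses and the bound tends to $\lambda^2+(1-\lambda)^2+2\lambda(1-\lambda)\|uv\|_{p/2}^{p/2}$, the first line of \eqref{tri3}. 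This clean collapse is precisely why the exponent $1/(p-1)$ is the right one: it equals $\lim_{N\to\infty}r(N,p)$ for $r(N,p)$ as in \eqref{rval1}, so the inner power $1/(p-1)$ and the outer power $p-1$ cancel in the limit.

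Finally I would finish with elementary algebra. Writing $\lambda^2+(1-\lambda)^2=1-2\lambda(1-\lambda)$ and using the identity $\|u^{p/2}-v^{p/2}\|_2^2=\|u\|_p^p-2\|uv\|_{p/2}^{p/2}+\|v\|_p^p=2(1-\|uv\|_{p/2}^{p/2})$, valid since $u,v$ are $L^p$ unit vectors, the limit becomes $1-\lambda(1-\lambda)\|u^{p/2}-v^{p/2}\|_2^2$. Substituting $\lambda(1-\lambda)=\|g\|_p\|h\|_p/(\|g\|_p+\|h\|_p)^2$ together with $u^{p/2}=|g|^{p/2}/\|g\|_p^{p/2}$ and $v^{p/2}=|h|^{p/2}/\|h\|_p^{p/2}$, then taking $p$-th roots and invoking $\|g+h\|_p\le\||g|+|h|\|_p$, yields \eqref{tri5}. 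The only genuine obstacle is the bookkeeping in the double passage to the limit: one must check the $L^p$ convergence of the averaged function and of the combinatorial coefficient $a_N$, and verify that applying $(\cdot)^{p-1}$ to $\tfrac1N+\tfrac{N-1}{N}(\cdot)^{1/(p-1)}$ really converges to $\lim\widetilde\Gamma_p$ and not to something larger; granting the earlier results, everything else is the algebra recorded in \eqref{tri3}.
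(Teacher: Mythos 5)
Your proposal is correct and follows essentially the same route as the paper: the same reduction to unit vectors $u,v$ with weight $\lambda$, the same block configuration $f_j\in\{u,v\}$ with $m_N/N\to\lambda$ fed into Corollary~\ref{maincl}, the same computation of $\widetilde{\Gamma}_p=a_N+(1-a_N)\|uv\|_{p/2}^{p/2}$, and the same limit and algebra leading to \eqref{tri5}. Your added care about the $L^p$ convergence in \eqref{tri2} and the cancellation of the powers $1/(p-1)$ and $p-1$ in the limit only makes explicit what the paper leaves implicit.
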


Somewhat different stability results for the triangle inequality have been proved by Aldaz; see \cite[Theorem 4.1]{A}. His bound involves the $L^2$ distance between $\|g\|_p^{-p/2}|g|^{p/2}$ and   $\|g+h\|_p^{-p/2}|g+h|^{p/2}$
as well as between   $\|h\|_p^{-p/2}|h|^{p/2}$ and   $\|g+h\|_p^{-p/2}|g+h|^{p/2}$.  His inequality is based on a stability result for H\"older's inequality for non-negative functions. A somewhat stronger stability theorem for H\"older's inequality that does not discard information about phases by passing from $g$ and $h$  to $|g|$ and $|h|$  in the first step, was obtained in \cite{CFL2}. This may be applied with dual indices $p$ and $p/(p-1)$ to
$\|g+h\|_p^p = \int  g w + \int hw$
where $w = |g+h|^{p-1}\overline{g+h}$ to prove a variant of Aldaz' bound that does not discard phase information.

\section{Proof of Theorem~\ref{main}}

The proof of Theorem~\ref{main}, for all $N$,  is actually relatively simple compared to the proof of the slightly more incisive result for $N=2$ that is proved in \cite{CFIL}.

\begin{proof} As in our previous paper \cite{CFIL}, we replace the measure ${\rm d}\mu$ with the probability measure
$${\rm d}\nu = \frac{1}{\|\textstyle{\sum_{j=1}^N f_j\|_p^p}}|\textstyle{\sum_{j=1}^N f_j|^p{\rm d}\mu}\ ,$$
having assumed without loss of generality that $\|\textstyle{\sum_{j=1}^N f_j}\|_p < \infty$. We then replace each $f_i$ 
by $(\textstyle{\sum_{j=1}^N f_j})^{-1} f_i$. Rewriting \eqref{mainin} in terms of the new measure and the new functions, we see that it suffices to prove \eqref{mainin} under the additional assumption that the reference measure is a probability measure and the functions satisfy $\textstyle{\sum_{j=1}^N f_j =1}$ almost everywhere.  

We proceed under this assumption, first considering $p>2$.  Then
${\textstyle \sum_{i< j}f_if_j   = \frac12\left( 1 - \sum_{j=1}^N f_j^2\right)}$,
Define 
$${\textstyle B :=  \sum_{j=1}^N\int  f_j^p}\ .$$
Then \eqref{gamde5}, which is \eqref{mainin} with a non-specific value of $r$,  becomes
\begin{equation}\label{nf3}
1  \leq \left( 1 + (N-1)\left(\frac{N\int \left( \frac12{{N}\choose{2}}^{-1} \left( 1 - \sum_{j=1}^N f_j^2\right) \right)^{p/2}} { B}\right)^r\right)^{p-1} 
B\ .
\end{equation}
 By Jensen's inequality \eqref{nf3} is implied, for $p>2$, by 
\begin{equation}\label{nf4}
1  \leq \left( 1 + (N-1)\left(\frac{N \left( \frac12{{N}\choose{2}}^{-1} \left( 1 - \sum_{j=1}^N\int  f_j^2\right) \right)^{p/2}} { B}\right)^r\right)^{p-1} 
B\ .
\end{equation}
 By H\"older's inequality, 
\begin{eqnarray}\label{holder}
\textstyle{ \int f^2 =  \int f^{(p-2)/(p-1)} f^{p/(p-1)}} &\leq&  \|f^{(p-2)/(p-1)} \|_{(p-1)/(p-2)} \|  f^{p/(p-1)}\|_{p-1} \nonumber\\
&=& \|f\|_1^{(p-2)/(p-1)} (\|f\|_p^p)^{1/(p-1)}\ .
\end{eqnarray}
There is equality if and only if $f$ is constant on its support.  Using this inequality, and H\"older's inequality once more, again with exponents $(p-1)/(p-2)$ and $p-1$,
\begin{equation}\label{holder2}
\textstyle{\sum_{j=1}^N\int  f_j^2} \leq \left( \sum_{j=1}^N  \|f_j\|_1  \right)^{(p-2)/(p-1)}  \left( \sum_{j=1}^N  \|f_j\|_p^p  \right)^{1/(p-1)}   = B^{1/(p-1)}\ .  
\end{equation}

Therefore, \eqref{nf4} is implied by the inequality
\begin{equation}\label{nf5}
1  \leq \left( 1 + (N-1)\left(\frac{N \left( \frac12{{N}\choose{2}}^{-1} \left( 1 - B^{1/(p-1)}\right) \right)^{p/2}} { B}\right)^r\right)^{p-1} 
B\ 
\end{equation} in the single parameter $B$, which can take values in the range $N^{1-p}$ to $1$. (To see this fact, note that since $\textstyle{\sum_{j=1}^N f_j =1}$ almost everywhere,
$\textstyle{\sum_{j=1}^N f_j^p \leq1}$ almost everywhere, and by H\"older's inequality, 
$1= \textstyle{\sum_{j=1}^N f_j} \leq \left(\textstyle{\sum_{j=1}^N f_j^p}\right)^{1/p}N^{(p-1)/p}\ .$)

The following change of variables is useful: we write
\begin{equation}\label{cha1}
B = \left(\frac1N + x\right)^{p-1}  =  N^{1-p} \left(1 + Nx    \right)^{p-1}   \qquad {\rm for}\qquad 0 \leq x \leq \frac{N-1}{N}\ .
\end{equation}
Then
$$
\frac{1}{N(N-1)} \left( 1 - B^{1/(p-1)}\right) = \frac{1}{N^2}\left( 1 - \frac{N}{N-1} x\right)\ , 
$$
and hence
$$
N \left(\frac{1}{N(N-1)} \left( 1 - B^{1/(p-1)}\right) \right)^{p/2} = N^{1-p} \left( 1 - \frac{N}{N-1} x\right)^{p/2}\ .
$$
The inequality \eqref{nf5} is  therefore equivalent to
\begin{equation}\label{nf5B}
N \leq    \left( 1 + (N-1)\left(\frac{\left( 1 - \frac{N}{N-1} x\right)^{p/2}} { (1+Nx)^{p-1}}\right)^r\right)(1+Nx)\ .
\end{equation} 
Note that in this parameterization, we eliminate the ``outside'' power of $p-1$. 

It remains to prove \eqref{nf5B} with $r = r(N,p)$ as specified in \eqref{rval1}. With this value of $r$, 
$$\frac1r = 1 + \frac{2N-1}{2N}(p-2)$$
and therefore
\begin{equation}\label{powercal1}
p-1 -\frac1r = \frac{1}{2N}(p-2)\ .
\end{equation}
Likewise, simple computations show that
\begin{equation}\label{powercal2}
\frac{p}{2} -\frac1r   =  - \frac{(p-2)(N-1)}{2N}\ .
\end{equation}
Distributing the factor of $(1+Nx)$ that is on the right in \eqref{nf5B}, we obtain the equivalent inequality,
$$
N \leq    (1+Nx)  + (N-1)\left(\frac{\left( 1 - \frac{N}{N-1} x\right)^{p/2}} { (1+Nx)^{p-1-1/r}}\right)^r\ , 
$$
or, equivalently,
$$
\left( 1 - \frac{N}{N-1}x\right)^{1/r} \leq   \frac{\left( 1 - \frac{N}{N-1} x\right)^{p/2}} { (1+Nx)^{p-1-1/r}}
$$
which further simplifies, using \eqref{powercal1} and  \eqref{powercal2},   to 
$$\left(1+Nx\right)^{\frac{p-2}{2N}}  \leq  \left( 1 - \frac{N}{N-1}x  \right) ^{-\frac{(p-2)(N-1)}{2N}}\ ,
$$and then again to
\begin{equation}\label{final}
1+Nx   \leq  \left( 1 - \frac{N}{N-1}x  \right) ^{1-N}\ .
\end{equation}
This is trivially true for $0 < x < \frac{N-1}{N}$  by convexity. 

Now consider the case $p\in (1,2)$ so that  $p-2 < 0$, However, for all $p > 2 - \frac{2N}{2N-1}$, $r = r(N,p)$ is positive. In particular, this is true for all $p \in (1,2)$.  
Since now $p/2\in (0,1)$, Jensen's inequality again says that the the reverse of  \eqref{nf3} is implied by the reverse of \eqref{nf4}.   
The exponents in the applications for H\"older's inequality in \eqref{holder} and \eqref{holder} are  $(p-1)/(p-2)<0$ and $p-1 < 1$, and hence, integrating only over the support of $f$ in \eqref{holder}, the reverse H\"older inequality yields the reverse of \eqref{holder} and \eqref{holder2}. Thus, it remains to prove the reverse of \eqref{nf5}. We proceed as above, and since $r>0$, each step leads to the reverse of its analog, until the very last one, in which we raise both sides to the $2N/(p-2)$ power. Since this is negative, the inequality reverse, and we have reduced the reverse of \eqref{nf5} to \eqref{final} as before. 
\end{proof}

\section{Sharpness of the Inequalities}

\begin{theorem} The inequality \eqref{nf5} is valid with $r = r(N,p)$ given by \eqref{rval1}, but it is false for any larger value of $r$. 
\end{theorem}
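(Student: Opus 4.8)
The plan is to build entirely on the reduction already carried out in the proof of Theorem~\ref{main}. The validity of \eqref{nf5} with $r = r(N,p)$ is precisely what was established there, through the chain of equivalences ending at \eqref{final}, so only the \emph{sharpness}---failure for every $r > r(N,p)$ when $p > 2$---remains to be shown. The decisive observation is that in the parametrization \eqref{cha1} the left endpoint $x = 0$ (equivalently $B = N^{1-p}$, the case of all functions equal) gives \emph{equality} in the equivalent inequality \eqref{nf5B} for \emph{every} value of $r$: there the inner ratio equals $1$, so both sides collapse to $N$. Consequently the whole question of sharpness is a question about the behavior of \eqref{nf5B} as $x \to 0^+$, which I will resolve by a single first-derivative computation.

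First I would introduce the defect function
\[
F_r(x) := \bigl(1 + (N-1)\,g(x)^r\bigr)(1+Nx) - N, \qquad g(x) := \frac{\bigl(1 - \tfrac{N}{N-1}x\bigr)^{p/2}}{(1+Nx)^{p-1}}\ ,
\]
so that \eqref{nf5B} reads $F_r(x) \ge 0$ on $0 < x < \tfrac{N-1}{N}$, while $F_r(0) = 0$ and $g(0) = 1$. By logarithmic differentiation,
\[
\frac{g'(0)}{g(0)} = -\frac{p}{2}\,\frac{N}{N-1} - (p-1)N\ ,
\]
and differentiating the product defining $F_r$ at $x = 0$, using $g(0)=1$, yields
\[
F_r'(0) = (N-1)\,r\,g'(0) + N^2 = N^2 - N r\Bigl(\tfrac{p}{2} + (N-1)(p-1)\Bigr)\ .
\]

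The key algebraic step is to recognize the bracket: since $\tfrac{p}{2} + (N-1)(p-1) = \tfrac12\bigl((2N-1)p - 2(N-1)\bigr)$ is exactly half the denominator of $r(N,p)$ in \eqref{rval1}, one has $\bigl(\tfrac{p}{2} + (N-1)(p-1)\bigr)\,r(N,p) = N$, and therefore $F_r'(0)$ factors cleanly as
\[
F_r'(0) = N\Bigl(\tfrac{p}{2} + (N-1)(p-1)\Bigr)\bigl(r(N,p) - r\bigr)\ .
\]
For $p > 2$ the bracket is strictly positive, so $F_r'(0)$ has the sign of $r(N,p) - r$: it vanishes exactly at $r = r(N,p)$ (consistent with the equality in \eqref{final} at $x = 0$), and is \emph{strictly negative} whenever $r > r(N,p)$. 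For such $r$, since $F_r(0) = 0$ and $F_r$ is differentiable, $F_r(x) < 0$ for all sufficiently small $x > 0$; the corresponding $B \in (N^{1-p},1)$ then violate \eqref{nf5B}, hence \eqref{nf5}, which proves falsity for every $r > r(N,p)$. The same formula covers $p \in (1,2)$ as well, where the relevant statement is the reverse inequality and, the bracket still being positive, the sense of sharpness simply flips to failure for $r < r(N,p)$.

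The only point demanding care---rather than a genuine obstacle---is justifying that the binding constraint is precisely the endpoint $x = 0$ and not some interior maximum of the defect. This is automatic in the sharpness direction: a \emph{strictly} negative first derivative at a zero of $F_r$ forces an immediate sign change to the right of $0$, so no second-order expansion or global analysis is needed, and the linearization at $x = 0$ alone pins down the optimal exponent $r(N,p)$.
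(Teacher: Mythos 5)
Your proof is correct and takes essentially the same approach as the paper: both reduce to the equivalent inequality \eqref{nf5B} via the change of variables \eqref{cha1} and extract the sharp exponent from a first-order expansion at the equality point $x=0$, concluding failure for every $r > r(N,p)$ from the strict sign of the linear term. Your derivative computation is accurate (in fact, your value of $g'(0)$ corresponds to the corrected coefficient $\alpha = \tfrac{N}{2N-2}\left((p-2)(2N-1)+2N\right)$; the paper's displayed $\alpha$ contains a typo, $3N$ in place of $2N$, though its final conclusion is the same as yours).
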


\begin{proof} As shown in the previous section, under  the change of variables \eqref{cha1}, the inequality \eqref{nf5}  becomes the inequality \eqref{nf5B}.
To leading order in $x$,
$$
\frac{\left( 1 - \frac{N}{N-1} x\right)^{p/2}} { (1+Nx)^{p-1}}  = 1  - \alpha x + {\mathcal O}(x^2) \qquad{\rm where}\qquad \alpha = \frac{N}{2N-2}((p-2)(2N-1) + 3N)\ .
$$
Therefore, 
\eqref{nf5B}  says that
$1 \leq   [N - (N-1)r\alpha  + {\mathcal O}(x^2)][N^{-1} +x]$, and this can only hold  if
$$r \geq \frac{N^2}{N-1}\frac1\alpha = \frac{2N}{2N+(p-2)(2N-1)}\ .$$

We have already seen that \eqref{nf5B} is valid with $r = r(N,p)$ given by \eqref{rval1}. The expansion shows that this is not true for any larger value of $r$. 
\end{proof}

Because \eqref{gamde5} is a weaker inequality than  \eqref{nf5}, this leaves open the question  whether  \eqref{gamde5} could be valid for larger values of $r$,  even though \eqref{nf5} is not. We have seen that this is the case for $N=2$, and that in this case, the optimal value of $r$ is $2/p$. However, the case $N=2$ is somewhat special, as we now show. 

Let the measure space be $[0,1)$ equipped with Lebesgue measure. For $j=1,\dots,N$, let $A_j = [(j-1)/N,j/N)$.  Define
$$f_j(x) = \begin{cases} a & x\in A_j\\
\frac{1-a}{N-1} & x\notin A_j \end{cases} \quad ,\quad a\in [0,1]\ .$$
For $a= 1/N$, each $f_j$ has the constant value $1/N$.
With this choice, $\sum_{j=1}^N f_j(x)$, $\sum_{j=1}^N f_j^p(x)$ and $\sum_{i < j} f_i(x)f_j(x)$ are all identically  constant on account of symmetry in the sets $A_j$, and
\begin{equation}\label{expAA}
\sum_{j=1}^N f_j =1\ ,
\end{equation}
\begin{equation}\label{expBB}
\sum_{j=1}^N f_j^p =  a^p + \left( \frac{1}{N-1} \right)^{p-1}(1-a)^p\ ,
\end{equation}
\begin{equation}\label{expCC}
{{N}\choose{2}}^{-1}\sum_{i < j} f_if_j =   \frac{1}{N^2} - \frac{1}{(N-1)^2}(a- 1/N)^2\ ,
\end{equation}
and therefore,
\begin{equation}\label{ONP}
\textstyle{\Gamma_p(f_1,\dots,f_N)  = \left(\frac{N \left(    \frac{1}{N^2} - \frac{1}{(N-1)^2}(a- \frac1N)^2
\right)^{p/2}} { a^p + \left( \frac{1}{N-1} \right)^{p-1}(1-a)^p}\right)}\ ,
\end{equation}
Then
\eqref{gamde5} would imply
\begin{equation}\label{num}
\textstyle{
1 \leq \left(1 + (N-1)  \left(\frac{N \left(    \frac{1}{N^2} - \frac{1}{(N-1)^2}(a- \frac1N)^2
\right)^{p/2}} { a^p + \left( \frac{1}{N-1} \right)^{p-1}(1-a)^p}\right)^r \right)^{p-1} \left( \textstyle{  a^p + \left( \frac{1}{N-1} \right)^{p-1}(1-a)^p}\right) := K_{N,p}(a)}\ .
\end{equation}
and there is equality at $a=1/N$, in which case all of the $f_j$ are equal,  and as well at $a=1$, in which case all of the $f_j$ have mutually disjoint support.  For $N=2$, the function $K_{2,p}(a)$ is symmetric in $a$ about $a=1/2$, and there is also equality at $a=0$.   For $N> 2$, this is not the case.   We can now deduce several restrictions on the values of $r$ for which \eqref{gamde5} could possibly be valid in general.

One such restriction comes from the fact that since there is equality in \eqref{ONP} at $a=1/N$, this value of $a$ must at least be a local minimizer of $K_{N,p}(a)$.

A Taylor expansion of \eqref{expBB} about $a=1/N$ yields
\begin{equation}\label{expBB2}
\sum_{j=1}^N f_j^p =  N^{1-p}\left( 1 + \frac{p(p-1)}{2}\frac{N^2}{N-1}t^2  +   \frac{p(p-1)(p-2)}{6}\frac{N^3(N-2)}{(N-1)^2}t^3 + {\mathcal O}(t^4)  \right)\ ,
\end{equation}
where $t= a-1/N$.   From here one easily finds that the inequality $K_{N,p}(a) \geq 1$ implies
$$0 \leq  -  r\frac{p(p-1)}{2} \frac{((p-1)N - p+2)}{N(N-1)}t^2  + \frac{p(p-1)}{2(N-1)} t^2 + {\mathcal O}(t^3)\ ,
$$
and this is true if and only if 
${\displaystyle r \leq    \frac{N}{N+ (p-2)(N-1)}}$. 
Thus, we must take $r$ no larger than $\tilde{r}(N,p)$ as defined in \eqref{rval2}.  Note that $\tilde{r}(2,p) = p/2$, and so this necessary condition on $r$ also turns out to be sufficient for $N=2$. However, it is {\em not} sufficient for $N > 2$:  
Note that the cubic term in \eqref{expBB2} vanishes only for $N=2$. For all other $N$, when $r = \tilde{r}(N,p)$, $a=1/N$ will be an inflection point of $K_{N,p}(a)$, and not a local minimum. 

\begin{remark}\label{sharpmaincl2} The same considerations apply to the family of inequalities
\begin{equation}\label{gamde55}
\|\textstyle{\sum_{j=1}^N f_j}\|_p^p \leq  \big[1 +(N-1)
\widetilde{\Gamma}_p^r(f_1,\dots,f_N)\big]^{p-1}{\textstyle \sum_{j=1}^N \|f_j\|_p^p}\ .
\end{equation}
because for our trial functions, $\widetilde{\Gamma}_p^r(f_1,\dots,f_N) = \Gamma_p^r(f_1,\dots,f_N)$. 
Thus, \eqref{gamde55} is valid for $r = r(N,p)$, but not for $r \leq \widetilde{r}(N,p)$. Since
$$\lim_{N\to\infty}r(N,p)  =\lim_{N\to\infty}\widetilde{r}(N,p) = \frac{1}{p-1}\ ,$$
it follows that the exponent $1/(p-1)$ is optimal in Corollary~\ref{maincl}, as claimed in Remark~\ref{sharpmaincl}.
\end{remark}

The simplest way to demonstrate that things do go wrong for $r = \tilde{r}(N,p)$, $N>2$ is to compute $K_{N,p}(0)$:   $K_{N,p}(0)\geq 1$  if and only if
\begin{equation}\label{nec2}
r \leq  \frac{\ln(N-1) - \ln(N-2)}{ (\frac{p}{2}-1)\ln(N) + \ln(N-1) - \frac{p}{2}\ln(N-2)   }\ .
\end{equation}

For example, with $N=3$ and $p=4$ the right side is
\begin{equation}\label{nec1}
\frac{   \ln(2)   }{ \ln(2) + \ln(3)} = 0.386852807...
\end{equation}
However, $\tilde{r}(3,4) = \frac37   =0.428571428...$, while $r(3,4) = \frac38 = 0.375$: The sufficient value $r(3,4)$ is quite close to the nececessary value specified in \eqref{nec1}. 
  Numerical experiments show that  with $r$ given by the right side of \eqref{nec2}, the inequality 
$K_{N,p}(a) \geq 1$ is likely to be valid, but of course, the inequality $K_{N,p}(a) \geq 1$ is only a  case of the inequality \eqref{mainin} for a very special choice of the functions $f_1,\dots,f_N$.   These trial functions were chosen to make
$\sum_{j=1}^N f_j^p$ and $\sum_{i<j}f_if_j$ constant. A convexity argument was used in \cite{CFIL} to reduce to this case, but the convexity on which this reduction relied fails already for $N=3$ and $p=4$.   Thus, while it is possible that the exponent $r$ in Theorem~\ref{main} could be improved slightly for $N>2$, it cannot be improved  by much.

\section{Related Results}

Until recently, most of the results related to our main theorem have concerned the case $N=2$. That there should be some strengthening of the elementary inequality \eqref{basconv} was first suggested in 2006 by Carbery,
who proposed \cite{C} several plausible refinements for $N=2$ and
$p\geq2$, of which the strongest was
\begin{equation} \label{carb}
\int \left|f+g \right|^p \leq \left(1+ \frac{\Vert fg \Vert_{p/2}} {\Vert 
f\Vert_p \Vert g\Vert_p} \right)^{p-1}\int\left(|f|^p +|g|^p\right).
\end{equation}

There is equality  when $f=g$ as in \eqref{basconv} and also when $f$ and $g$ have disjoint support. The first proof 
of  \eqref{carb} is in \cite{CFIL}, where the following stronger result is proved. 
\begin{equation} \label{carb+}
\int \left|f+g \right|^p \leq \left(1+ \Gamma_p
^{2/p}\right)^{p-1}\int\left(|f|^p +|g|^p\right),
\end{equation}
where $\Gamma_p =\Gamma_p(f,g)$ is defined in \eqref{gamde2}.

 By the arithmetic-geometric mean inequality \eqref{carb+} is stronger that \eqref{carb}, and the difference can be significant.
Not only is \eqref{carb+}  stronger, 
 it is sharp, in the sense that  $\Gamma_p^{2/p}$ cannot be replaced by $\Gamma_p^{r}$ for any $r<2/p$.
This leaves open, however, the possibility of replacing $(1+ \Gamma_p^{2/p})^{p-1}$ by some other function of $\Gamma_p$. 
After this paper was completed we received the preprint \cite{IM} in which such an inequality was proved:
\begin{equation} \label{carb++}
\int \left|f+g \right|^p \leq \left(  
\left(\frac{1+\sqrt{1-\Gamma_p^2}}{2}\right)^{1/p} +  \left(\frac{1-\sqrt{1-\Gamma_p^2}}{2}\right)^{1/p}
\right)^{p}\int\left(|f|^p +|g|^p\right).
\end{equation}

It is not at all obvious that \eqref{carb++} is stronger than \eqref{carb+}. The inequality 
\begin{equation} \label{pre}
 \left(  
\left(\frac{1+\sqrt{1-\gamma^2}}{2}\right)^{1/p} +  \left(\frac{1-\sqrt{1-\gamma^2}}{2}\right)^{1/p}
\right)^{p}  \leq     \left(1+ \gamma^{2/p}\right)^{p-1},
\end{equation}
which is valid for all $0\leq \gamma\leq 1$, is equivalent to the inequality proved in \cite[Theorem 1.3]{CFIL}, as noted in \cite{IM}.
Indeed, this inequality for all 
 $p\geq2$ is within a few percent of  being an identity, and  thus there is little {\em numerical} difference between
\eqref{carb+} and  \eqref{carb++}. The significant difference is that the function of $\Gamma_p$ on the right side of \eqref{carb++}  is shown to be the best possible in \cite{IM} in that for all $\gamma\in [0,1]$, on any ``nice'' measure space, there are functions $f$ and $g$ for which
$\Gamma_p(f,g) = \gamma$, and such that equality holds in \eqref{carb++}. However, even knowing that \eqref{carb++} is a non-improvable result for $N=2$, one does not have a new proof of
\eqref{carb+} that does not rely on  \cite[Theorem 1.3] {CFIL}.   Although Carbery proposed \eqref{carb} only for $p>2$, the inequaites \eqref{carb+} and \eqref{carb++} have been shown to hold for all $p$, with reversal of the inequality for $p < 0$ and $p\in (1,2)$. 

Carbery's paper suggests that an extension of \eqref{carb} for $N$ functions might be possible with a right hand side that involves certain matrix norms of the $N\times N$ matrix  with entries   
$\frac{\|f_if_j\|_{p/2}} {\|f_i\|_p \|f_j\|_p}$; see section 3.5 of \cite{C}. (There appears to be a typo here; the proposal as written is not homogenous.)
His proposal  partially motivated this study. The paper \cite{IM} also contains an upper  bound on the 
$L^p$ norm of a sum of $N$ functions for $N>2$, but it is for the case $p\in (1,2)$, and is of an entirely different character than the one we present here.


\bibliographystyle{amsalpha}

\begin{thebibliography}{6}

\bibitem{A} J.~M.~Aldaz, \textit{A Stability Theorem for H\"older's Inequality}. J. Math. Anal. Appl. {\bf 343} (2008), 842--852.

\bibitem{BCL} K.~Ball, E.~A.~Carlen and E.~H.~Lieb, \textit{Sharp uniform convexity and smoothness inequalities for trace norms}. Invent. Math. \textbf{115} (1994), no. 3, 463--482.


\bibitem{CFIL}  E.~A.~Carlen, R.~Frank, P.~Ivanisvili and E.~H.~Lieb, \textit{Inequalities for $L^p$-norms that 
sharpen the triangle inequality and complement Hanner's Inequality}. arXiv preprint 1807.05599v2.

\bibitem{CFL2}  E.~A.~Carlen, R.~Frank and E.~H.~Lieb, \textit{Stability estimates for the lowest eigenvalue of a Schr\"odinger operator}. Geom. Func. Analysis {\bf 24} (2014), 63--84.



\bibitem{C}  A.~Carbery,  \textit{Almost orthogonality in the Schatten--von 
Neumann classes}. J. Operator Theory \textbf{ 62} (2009), no. 1,  151--158.

\bibitem{Cl} J.~A.~Clarkson,  \textit{Uniformly convex spaces}. Trans. Amer. Math. Soc.  {\bf  40}  (1936),  396--414.

\bibitem{H}  O.~Hanner,  \textit{On the uniform convexity of $L^p$ and $\ell^p$}. Ark. Math. \textbf{ 3} (1956),  239--244.


\bibitem{IM}  P.~Ivanisvili and C.~Mooney,  \textit{Sharpening the triangle inequality: envelopes between $L^2$ and $L^p$}.
arXiv preprint 1902.02329. 

\end{thebibliography}

\end{document}